\newcommand{\qf}[1]{{\langle{#1}\rangle}}
\newcommand{\bpf}[1]{{\langle\!\langle}{#1}\rangle\!\rangle}
\newcommand{\pf}[1]{{\langle\!\langle{#1}]\!]}} 
\newcommand{\C}{\mathcal{C}}
\newcommand{\mc}{{\mathbb{C}}}
\newcommand{\mq}{{\mathbb{Q}}}
\newcommand{\mh}{{\mathbb {H}}}
\newcommand{\mz}{{\mathbb {Z}}}
\DeclareMathOperator{\Nrd}{Nrd}
\DeclareMathOperator{\dlog}{dlog}
\newtheorem{lem}{Lemma}[section]
\newtheorem{prop}[lem]{Proposition}
\newtheorem{thm}[lem]{Theorem}
\newtheorem{cor}[lem]{Corollary}
\theoremstyle{remark}
\newtheorem{rem}[lem]{Remark}
\newtheorem{example}[lem]{Example}
\title[]{Minimal quadratic forms for the function field of a conic in characteristic $2$}   
\author{Adam Chapman} 
\address{School of Computer Science, Academic College of Tel-Aviv-Yaffo, Rabenu Yeruham St., P.O.B 8401 Yaffo, 6818211, Israel}
\email{adam1chapman@yahoo.com}
\thanks{The first author acknowledges the receipt of the Chateaubriand Fellowship (969845L) offered by the French Embassy in Israel, and thanks LAGA, Universit\'e Sorbonne Paris Nord, for the hospitality in Fall 2020. }
\author{Anne Qu\'eguiner-Mathieu}
\address{Universit\'e Sorbonne Paris Nord, Institut Galilée, 
LAGA - UMR 7539 du CNRS,  
93430 Villetaneuse, France}
\email{queguin@math.univ-paris13.fr}
\thanks{Both authors thank Jean-Pierre Tignol for useful conversations on $v$-value functions and norms.} 
\keywords{Quaternion Algebras; Quadratic Forms; Function Fields of Conics; Algebras with Involution}  
\subjclass[2010]{11E81, 11E04, 16K20}
\begin{document}

\begin{abstract}
In this note, we construct explicit examples of $F_Q$-minimal quadratic forms of dimension $5$ and $7$, where $F_Q$ is the function field of a conic over a field $F$ of characteristic $2$. The construction uses the fact that any set of $n$ cyclic $p$ algebras over a field of characteristic $p$ can be described using only $n+1$ elements of the base field. It also uses a general result that provides an upper bound on the Witt index of an orthogonal sum of two regular anisotropic quadratic forms over a henselian valued field. 
\end{abstract} 

\maketitle

\section{Introduction} 

Let $Q$ be a quaternion division algebra over a field $F$. Its Severi-Brauer variety is a conic, which is a codimension $1$ subform of the norm form $n_Q$ of $Q$. We denote by $F_Q$ the function field of this conic. 
One may try to describe anisotropic quadratic forms that become either hyperbolic or isotropic over this function field $F_Q$. 
For hyperbolicity, there is a well-known answer : since $n_Q$ is a $2$-fold Pfister form, the subform theorem~\cite[Cor. 23.6]{EKM} asserts that an anisotropic quadratic form becomes hyperbolic over $F_Q$ if and only if it is a multiple of $n_Q$. The question is much harder for isotropy. 
Clearly, any quadratic form having the conic as a subform becomes isotropic over $F_Q$. But the converse does not hold. 
This was initially noticed by Wadsworth, and led to the notion of minimal form, introduced by Hoffmann: a quadratic form $\varphi$ over $F$ is called $F_Q$-minimal if $\varphi$ is anisotropic, 
$\varphi_{F_Q}$ is isotropic, and 
$\varphi$ does not contain any strict subform which becomes isotropic over $F_Q$.
Minimal forms in characteristic different from $2$ were studied in the 90's by Hoffmann, Lewis and Van Geel, see
~\cite{Hoffmann:thesis},~\cite{LewisVanGeel:94},~\cite{HoffmannLewisVanGeel:1992} and~\cite{HoffmannVanGeel:1995}. 
In particular, they described a field and a conic for which there exist minimal forms of any odd dimension $\geq 3$, see~\cite[Thm. 4.6]{HoffmannVanGeel:1995}. 

Large parts of this theory extend to the characteristic $2$ case, as was shown by Faivre in~\cite{Faivre:thesis}. In particular, he defined an invariant, called the minimal splitting index $\bar{s}_{{F_Q}}(\varphi)$, which can be used to characterize $F_Q$-minimal forms. For forms of dimension $5$, he gave another characterization using the Clifford algebra. Nevertheless, his thesis does not contain any explicit example of $F_Q$-minimal forms. In this note, we construct such examples in dimension $5$ and $7$, see~\cref{dim5.thm} and~\cref{dim7.thm}, which are the first to appear in the literature in characteristic 2. 

Recall that the even Clifford algebra of a quadratic space $(V,q)$ is naturally endowed with an involution, called the canonical involution, which is of symplectic type if the base field has characteristic $2$ and $V$ is odd dimensional, see~\cite[\S 8.A]{BOI}. Moreover, a central simple algebra with symplectic involution $(A,\sigma)$ is called hyperbolic if $A$ contains an isotropic ideal of maximal dimension, that is a right ideal $I\subset A$ such that $\sigma(I)I=\{0\}$ and $\dim_F(I)=\frac 12\dim_F(A)$~\cite[(6.8)]{BOI}. Starting from an $F_Q$-minimal quadratic form of dimension $5$, the even Clifford algebra functor produces an example of a degree $4$ algebra with symplectic involution which is non hyperbolic, becomes hyperbolic over $F_Q$, and does not contain any subalgebra with involution  isomorphic to $(Q,\bar{\ \,})$, where $\bar{\ }$ denotes the canonical involution. This was already observed in~\cite{QueguinerTignol:2010} in characteristic different from $2$ and in~\cite{DolphinQueguiner:2017} in characteristic $2$. So our paper also provides explicit examples of such algebras with involution in characteristic $2$. 

It is true in general that the even Clifford algebra of a quadratic form decomposes as a tensor product of quaternion algebras with involution. Over a field of characteristic $2$, any algebra which decomposes as a tensor product of $n$ quaternion algebras may be described in a specific way, using only $n+1$ elements of the base field. This fact, which actually holds over any field of positive characteristic for tensor product of cyclic $p$ algebras, see~\cref{ncyclic}, has been a crucial ingredient in designing our examples of minimal forms. 

The base fields in those examples are iterated Laurent series field. Over such a field, any vector space endowed with a regular quadratic form admits a $v$-value function which is a norm. To prove the $F_Q$-minimality of the $7$-dimensional example, we use a general result of independant interest and valid in arbitrary characteristic, which provides an upper bound for the Witt index of an orthogonal sum of two regular quadratic forms over a henselian valued field, see~\cref{WittIndex.st}. 

The content of the paper is as follows. In~\cref{cyclic.sec}, we provide a general description of any set of $n$ cyclic $p$ algebras over a field of characteristic $p$, where $p$ is an arbitrary prime number. The bound on the Witt index of an orthogonal sum of two regular quadratic forms over a henselian valued field of arbitrary characteristic is in~\cref{quadform.sec}. Finally, the examples of $F_Q$-minimal forms over a base field of characteristic $2$ are described in~\cref{minimal.sec}. 

\subsection*{Notations}
Throughout the paper, $F$ is a field of positive characteristic $p$, except in~\cref{quadform.sec} where the characteristic is arbitrary; in Section \ref{minimal.sec} on minimal quadratic forms, we will assume $p=2$. 

For $\alpha\in F$ and $\beta\in F^\times$, we let $[\alpha,\beta)_{p,F}$ be the cyclic $p$-algebra generated over $F$ by two elements $i$ and $j$ subject to the relations $i^p-i=\alpha$, $j^p=\beta$ and $jij^{-1}=i-1$. 
It follows from the definition that $[\alpha+x^p-x,\beta y^p)_{p,F}\simeq [\alpha,\beta)_{p,F}$ for all $x\in F$ and $y\in F^\times$. 
Recall that $[\alpha,\beta)_{p,F}$ is either division or isomorphic to $M_p(F)$. In particular, for all $\alpha\in F$ and $\beta\in F^\times$, we have 
$[\alpha,1)_{p,F}\simeq [0,\beta)_{p,F}\simeq [\beta,\beta)_{p,F}\simeq M_p(F).$ In the sequel, we will use the following, which is proved in~\cite[Lemma 2.3(a)]{Chapman:2017}: 
\begin{equation}
\label{easyid}
[\alpha,\beta)_{p,F}\otimes_F [\gamma,\delta)_{p,F}\simeq [\alpha+\gamma,\beta)_{p,F}\otimes [\gamma,\beta^{-1}\delta)_{p,F},
\end{equation}
for all $\alpha,\gamma\in F$ and $\beta,\delta\in F^\times$. 

We denote by $\Omega_F^*$ the ring of differential forms. It contains an element $d\beta$ for all $\beta\in F$, and we let $\dlog(\beta)=\beta^{-1}\, d\beta$ for all $\beta\not =0$. 
The subspace spanned by $n$-fold differentials $\Omega_F^n$ admits a well-defined Artin-Schreier map $\wp : \Omega_F^n  \rightarrow \Omega_F^n / d \Omega_F^{n-1}$ given by
$$ \alpha \dlog(\beta_1) \wedge \dots \wedge \dlog(\beta_n) \mapsto (\alpha^p-\alpha) \dlog(\beta_1) \wedge \dots \wedge \dlog(\beta_n).$$
The cokernel of this map is the Kato-Milne cohomology group $H_p^{n+1}(F)$.
For $n=0$ one recovers the Artin-Schreier group $H_p^1(F)=F/\wp(F)$ which describes cyclic degree $p$ extensions of $F$. For $n=1$, there is an isomorphism to the $p$-torsion of the Brauer group $H_p^2(F)\simeq {_p}{\mathrm{Br}}(F)$ (see \cite[Section 9]{GS}) given by $\alpha \dlog(\beta) \mapsto [\alpha,\beta)_{p,F}$, where we use the same notation for the Brauer class of the cyclic algebra $[\alpha,\beta)_{p,F}$ as for the algebra itself. So, when viewed as an element of the Brauer group, the symbol $[\alpha,\beta)_{p,F}$ is additive in $\alpha$ and multiplicative in $\beta$, that is 
\[[\alpha_1+\alpha_2,\beta)_{p,F}= [\alpha_1,\beta)_{p,F}+ [\alpha_2,\beta)_{p,F},\mbox{ and } [\alpha,\beta_1\beta_2)_{p,F}= [\alpha,\beta_1)_{p,F}+[\alpha,\beta_2)_{p,F},\] 
for all $\alpha, \alpha_1,\alpha_2\in F$ and $\beta, \beta_1, \beta_2\in F^\times$. 
Using these relations, one may also check, the following identity: 
\begin{equation}
\label{mainid}
[\alpha,\beta)_{p,F}\simeq [(\beta+x^p)\alpha\beta^{-1},\beta+x^p)_{p,F},
\end{equation} 
for all $\alpha\in F$, $\beta\in F^\times$ and $x\in F$ such that $\beta+x^p\not=0$. 
Indeed, by definition, we have $(\beta+x^p)\dlog(\beta+x^p)=d(\beta+x^p)$. It follows that  $(\beta+x^p)\alpha\beta^{-1}\dlog(\beta+x^p)=\alpha\beta^{-1}d(\beta+x^p)=\alpha\beta^{-1}d\beta=\alpha\dlog(\beta)$, as required. See also~\cite[Lemma 2.2(d)]{Chapman:2017} for a direct argument. 

Recall that when $p=2$, there is an isomorphism $H_2^{n+1}(F)=I_q^{n+1} F/I_q^{n+2} F$ where $I_q(F)$ is the fundamental ideal of the Witt group of nonsingular quadratic forms over $F$, as proved by Kato in~\cite{Kato:1982}. The symbol 
$\alpha \dlog(\beta_1) \wedge \dots \wedge \dlog(\beta_n)$ corresponds under this isomorphism to the $n$-fold Pfister form $\langle \! \langle \beta_n,\dots,\beta_1,\alpha]\!]$, which is, by definition, the tensor product of the $(n-1)$-fold bilinear Pfister form $\langle \! \langle \beta_n,\dots,\beta_1\rangle \! \rangle$ with the rank $2$ non singular quadratic form $[1,\alpha]$. If $n=2$, we get the quadratic form $\pf{\beta,\alpha}=[1,\alpha]\perp \beta[1,\alpha]$ which is the norm form $n_Q$ of the quaternion algebra $Q=[\alpha,\beta)_{2,F}$. 

Let $(A,\sigma)$ be a central simple algebra, endowed with a symplectic involution. A right ideal $I\subset A$ is called isotropic (with respect to the involution $\sigma$) if $\sigma(x)y=0$ for all $x,y\in I$. An isotropic ideal has dimension at most $\frac 12 \dim_F(A)$. The involution $\sigma$ is called isotropic (respectively hyperbolic) if there exists a non trivial isotropic ideal $I\subset A$ (respectively an isotropic ideal of dimension $\frac 12 \dim_F(A)$). Over a field of characteristic $2$, this definitions apply in particular to the Clifford algebra and the even Clifford algebra of an odd dimensional quadratic space, since its canonical involution is symplectic, see~\cite[6.A, 6.B, 8.A]{BOI}. 

\section{On sets and tensor products of cyclic $p$-algebras over a field of characteristic $p$}
\label{cyclic.sec}


The main result in this section is the following :
\begin{thm}
\label{ncyclic}
Let $F$ be a field of positive characteristic $p$. Any $n$ cyclic $p$-algebras can be written as 
\[[a_0,a_1)_{p,F},[a_1,a_2)_{p,F},\dots, [a_{n-1},a_n)_{p,F},\]
for some non-zero elements $a_i\in F^\times$. 
\end{thm}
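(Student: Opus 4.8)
The plan is to argue by induction on $n$. For the base case $n=1$, a cyclic $p$-algebra has the form $[\alpha,\beta)_{p,F}$ with $\beta\in F^\times$: if $\alpha\neq 0$ we take $(a_0,a_1)=(\alpha,\beta)$, and if $\alpha=0$ then $[\alpha,\beta)_{p,F}\simeq M_p(F)\simeq[1,1)_{p,F}$ and we take $(a_0,a_1)=(1,1)$.

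For the inductive step, given cyclic $p$-algebras $A_1,\dots,A_n$, I would apply the inductive hypothesis to $A_1,\dots,A_{n-1}$, writing $A_i\simeq[a_{i-1},a_i)_{p,F}$ for $1\le i\le n-1$ with all $a_j\in F^\times$, and then adjoin $A_n$ at the right-hand end of this chain. Writing $A_n=[\alpha_n,\beta_n)_{p,F}$, the task is to alter the representation of $A_n$ — and, if need be, the shared element $a_{n-1}$ — so that $A_n\simeq[a_{n-1},a_n)_{p,F}$ for some $a_n\in F^\times$. Four ingredients are available. First, the relation $[\alpha+x^p-x,\beta y^p)_{p,F}\simeq[\alpha,\beta)_{p,F}$: in particular, with $x=0$, it lets one multiply $a_{n-1}$ by an arbitrary $p$-th power in the representation $A_{n-1}\simeq[a_{n-2},a_{n-1})_{p,F}$ without disturbing $A_1,\dots,A_{n-1}$. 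Second, identity \eqref{mainid}, which replaces the pair $(\alpha_n,\beta_n)$ by $\bigl((\beta_n+x^p)\alpha_n\beta_n^{-1},\,\beta_n+x^p\bigr)$ and, applied to $A_{n-1}$, replaces $(a_{n-2},a_{n-1})$ by $\bigl((a_{n-1}+x^p)a_{n-2}a_{n-1}^{-1},\,a_{n-1}+x^p\bigr)$. Third, additivity of the symbol $[\alpha,\beta)_{p,F}$ in its first argument, which shows that $A_n\simeq[c,\beta_n)_{p,F}$ precisely when $c-\alpha_n$ belongs to the additive subgroup $\{\gamma\in F : [\gamma,\beta_n)_{p,F}\simeq M_p(F)\}$, i.e. the kernel of $\gamma\mapsto[\gamma,\beta_n)_{p,F}$. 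Fourth, identity \eqref{easyid} together with $[\gamma,1)_{p,F}\simeq M_p(F)$, used to absorb a split symbol and redistribute the slots. Combining these, one arranges $a_{n-1}$ to be an admissible first slot for $A_n$, and then takes $a_n$ to be the corresponding second slot.

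The main obstacle is exactly the coordination of the shared element $a_{n-1}$: it must simultaneously be an admissible second slot of $A_{n-1}$ within the already-constructed sub-chain and an admissible first slot of $A_n$, and each of these conditions only allows $a_{n-1}$ to vary over a coset of a proper subgroup of $F^\times$ (respectively of $F$), so no naive choice can be expected to satisfy both at once. The point is that the whole chain is allowed to move: modifying $a_{n-1}$ (via \eqref{mainid} applied to $A_{n-1}$) forces a compensating modification of $a_{n-2}$, hence of $a_{n-3}$, and so on, and this cascade must be shown to terminate — the leftmost link, where the parameter $a_0$ still enjoys freedom, providing the needed slack. Carrying out this propagation, i.e. proving that the admissible values of $a_{n-1}$ coming from the two sides always overlap once the chain is free to adjust, is the technical core of the step, and is where the interplay of \eqref{easyid}, \eqref{mainid} and the split-symbol identity $[\gamma,1)_{p,F}\simeq M_p(F)$ does the work.
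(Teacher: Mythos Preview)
Your proposal is not a complete proof: you explicitly flag the backward cascade through the chain as ``the technical core of the step'' and then do not carry it out. The cascade is a genuine obstruction for your right-adjoin strategy. To change the second slot $a_{n-1}$ of $A_{n-1}=[a_{n-2},a_{n-1})_{p,F}$ while keeping its first slot $a_{n-2}$ fixed, the only move available is $a_{n-1}\mapsto a_{n-1}y^p$; there is no reason this coset should meet the Artin--Schreier class $\alpha_n+\wp(F)$ of admissible first slots for $A_n$. Identity~\eqref{mainid} does let you reach $a_{n-1}+x^p$, but it simultaneously moves $a_{n-2}$, and at the next link you face the very same problem of realising a \emph{prescribed} second slot; nothing you have written explains why the propagation eventually succeeds. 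Note also that identity~\eqref{easyid} is a statement about tensor products, not about the individual isomorphism classes $A_i$, so it is not a tool for this problem.

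The paper sidesteps all of this with one twist: it adjoins the new algebra on the \emph{left}. Applying the inductive hypothesis to $A_2,\dots,A_n$ gives $A_2=[a_1',a_2)_{p,F}$ and $A_i=[a_{i-1},a_i)_{p,F}$ for $i\ge 3$, and it remains to link $A_1$ to $A_2$ without touching $a_2$. When both are division this is \cref{2cyclic}: writing $A_1=[\alpha,\beta)_{p,F}$ and setting $x=a_1'-\beta$, the element $b=\beta+x^p=a_1'+x^p-x$ is simultaneously a new second slot for $A_1$ via~\eqref{mainid} and a new first slot for $A_2$ via the Artin--Schreier relation, while the second slot $a_2$ of $A_2$ is untouched. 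Only $a_0$ and $a_1$ move, and there is no cascade. The asymmetry of the symbol---shifting the first slot by $\wp(F)$ leaves the second slot alone, whereas moving the second slot via~\eqref{mainid} drags the first slot with it---is precisely what makes the left-adjoin a single clean step and the right-adjoin hard. (Split factors are handled afterwards by appending $[a_k,1)_{p,F}\simeq[1,1)_{p,F}$.)
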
 
The proof uses the following lemma: 
\begin{lem}
\label{2cyclic}
Let $A_1=[\alpha,\beta)_{p,F}$ and $A_2=[\gamma,\delta)_{p,F}$ be two cyclic division $p$-algebras. 
Then, there exist $a,b\in F^\times$ such that $A_1=[a,b)_{p,F}$ and $A_2=[b,\delta)_{p,F}$. 
\end{lem}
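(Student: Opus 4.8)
The plan is to massage the two symbols $A_1=[\alpha,\beta)_{p,F}$ and $A_2=[\gamma,\delta)_{p,F}$ until the second slot of $A_1$ and the first slot of $A_2$ agree. The obvious candidate for the common element is $\beta$ itself: we want to rewrite $A_2$ so that its first argument becomes $\beta$, at the cost of changing its second argument. The tool for this is identity~\eqref{mainid}, which says $[\gamma,\delta)_{p,F}\simeq[(\delta+x^p)\gamma\delta^{-1},\delta+x^p)_{p,F}$ for any $x\in F$ with $\delta+x^p\neq 0$; so if we could choose $x$ with $\delta+x^p=\beta$, we would be done immediately with $a=\alpha$, $b=\beta$. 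That is too much to hope for in general, so instead I would first enlarge the room to manoeuvre using the relation $[\gamma,\delta)_{p,F}\simeq[\gamma+z^p-z,\delta y^p)_{p,F}$ recorded just after the definition of the cyclic algebra: the second slot of $A_2$ is only well-defined modulo $p$-th powers, and the first slot modulo $\wp(F)$.

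Concretely, here is the sequence of steps I expect to carry out. First, using~\eqref{mainid} on $A_2$ with a suitable $x$, replace $A_2$ by an isomorphic symbol $[\gamma',\delta')_{p,F}$ in which $\delta'=\delta+x^p$; the freedom in $x$ lets us arrange, say, that $\delta'$ avoids any finite set of bad values, or more usefully that $\delta'$ differs from $\beta$ by a $p$-th power. The key elementary observation is that an element of the form $\delta+x^p$ can be made to equal $\beta y^p$ for some $x\in F$, $y\in F^\times$ precisely when $\delta/\beta$ lies in the subgroup generated by $F^{\times p}$ and elements of the shape $1+t^p/\beta$ — but this is not automatic, so I do not expect $A_2$ alone to be adjustable to first slot $\beta$. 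Instead the honest approach is to use that $A_1$ is division and hence $\beta\notin F^{\times p}$, write the second slot of $A_1$ as $\beta$ times an arbitrary $p$-th power (harmless), and then apply~\eqref{easyid} or a direct Artin--Schreier computation to shuffle. The cleanest route is: by~\eqref{mainid} applied to $A_1$, we have $A_1\simeq[(\beta+x^p)\alpha\beta^{-1},\beta+x^p)_{p,F}$ for any valid $x$; and by the same identity applied to $A_2$, $A_2\simeq[(\delta+x'^p)\gamma\delta^{-1},\delta+x'^p)_{p,F}$. So it suffices to choose $x,x'\in F$ with $\beta+x^p=\delta+x'^p=:b$ and $b\neq 0$; then $A_1\simeq[a,b)_{p,F}$ with $a=(\beta+x^p)\alpha\beta^{-1}$, and $A_2\simeq[b\gamma\delta^{-1},b)_{p,F}$, and one more swap of the two slots of $A_2$ (using that $[u,b)_{p,F}$ and $[b,u)_{p,F}$ represent opposite, hence after adjusting signs the same up to the allowed equivalences — more precisely $[b,u)_{p,F}$ can be produced from $[u,b)_{p,F}$ via the standard symbol relations) puts $A_2$ in the form $[b,\delta'')_{p,F}$.

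The genuine obstacle, and the step that needs care, is the existence of $x,x'\in F$ with $\beta+x^p=\delta+x'^p\neq 0$ when $\beta$ and $\delta$ are unrelated: this amounts to solving $x^p-x'^p=\delta-\beta$, and in characteristic $p$ the map $t\mapsto t^p$ is additive but not surjective, so this equation has no reason to be solvable over $F$. I would resolve this not by brute force but by combining~\eqref{mainid} with~\eqref{easyid}: replace the pair $(A_1,A_2)$ by $(A_1\otimes A_2$-decompositions$)$ via~\eqref{easyid}, namely $[\alpha,\beta)_{p,F}\otimes[\gamma,\delta)_{p,F}\simeq[\alpha+\gamma,\beta)_{p,F}\otimes[\gamma,\beta^{-1}\delta)_{p,F}$, which already shares the element $\beta$ — but this changes $A_1,A_2$ as a pair, not individually, so it proves a statement about the \emph{tensor product}, not about the two algebras separately. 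Since~\cref{ncyclic} (whose $n=2$ case this lemma underlies) is a statement about the algebras individually, I expect the correct final argument to go through~\eqref{mainid} after first using the reduction $[\gamma,\delta)_{p,F}\simeq[\gamma+z^p-z,\delta y^p)_{p,F}$ to normalize $\delta$, then invoking~\cite[Lemma 2.3]{Chapman:2017} or~\eqref{easyid} in a form that, applied to $A_1\otimes A_1^{\mathrm{op}}\otimes A_2$ or to $A_2$ twisted by a split symbol $[\ast,1)_{p,F}$, yields the common middle term $b$ with both $A_1\simeq[a,b)_{p,F}$ and $A_2\simeq[b,\delta)_{p,F}$ exactly as stated. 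The bookkeeping of which slot is additive and which is multiplicative, and verifying $b\neq 0$ throughout, is the part I would write out in full.
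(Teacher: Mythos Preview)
Your proposal circles the right tools but never lands on the decisive step, and the ``cleanest route'' you sketch contains an actual error. Applying~\eqref{mainid} to \emph{both} $A_1$ and $A_2$ moves the \emph{second} slot of each, so you end up with $A_2\simeq[b\gamma\delta^{-1},b)_{p,F}$ and then need to swap the two slots. That swap is not available: there is no general isomorphism $[u,b)_{p,F}\simeq[b,u)_{p,F}$ (in differential-form language, $u\,\dlog b$ and $b\,\dlog u$ are unrelated in $H^2_p(F)$), and the ``opposite algebra'' remark does not rescue it for $p>2$. Likewise, your observation that $x^p-x'^p=\delta-\beta$ need not be solvable is correct, so that avenue really is blocked, and the subsequent detours through~\eqref{easyid} or tensoring with split symbols never produce a statement about $A_1$ and $A_2$ individually.

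The idea you are missing is to combine the two freedoms you already listed but on \emph{different} slots and with the \emph{same} parameter: act on the first slot of $A_2$ via the Artin--Schreier relation $[\gamma,\delta)_{p,F}\simeq[\gamma+x^p-x,\delta)_{p,F}$, and on the second slot of $A_1$ via~\eqref{mainid}, $[\alpha,\beta)_{p,F}\simeq[(\beta+x^p)\alpha\beta^{-1},\beta+x^p)_{p,F}$. Setting $x=\gamma-\beta$ gives $\gamma+x^p-x=\beta+x^p=:b$, so both slots land on the same element; $b\neq 0$ because $A_1$ division forces $\beta\notin F^{\times p}$. That is the whole proof. Note in particular that $\delta$ is left untouched, exactly as the statement demands.
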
 
\begin{proof}[Proof of~\cref{2cyclic}]
Let $x=\gamma-\beta$, so that $\gamma+x^p-x=\beta+x^p$ and denote by $b$ this element. 
Since $A_1$ is division, $\beta\not\in F^{\times p}$, so that $b=\beta+x^p\not =0$; hence, by~\eqref{mainid}, the algebra $A_1$ is given by 
\[A_1=[(\beta+x^p)\alpha\beta^{-1},\beta+x^p)_{p,F}=[a,b)_{p,F},\]
where $a=(\beta+x^p)\alpha\beta^{-1}$. Again since $A_1$ is division we have $a\in F^\times$; 
moreover, we have 
\[A_2=[\gamma+x^p-x,\delta)_{p,F}=[b,\delta)_{p,F}\] 
and this finishes the proof. 
\end{proof}
\begin{proof}[Proof of~\cref{ncyclic}]
We first prove the result when all algebras are division, by induction on their number $n$. It clearly holds if $n=1$. 
Given $n$ cyclic $p$-algebras $A_1,\dots, A_n$, the induction hypothesis provides $a_1',a_2,\dots, a_n\in F^\times$ such that $A_2=[a_1',a_2)_{p,F}$ and $A_i=[a_{i-1},a_i)_{p,F}$ for $i\in\llbracket3;n\rrbracket$. 
~\cref{2cyclic} applied to $A_1$ and $A_2$ now provides $a_0$ and $a_1\in F^\times$ such that $A_1=[a_0,a_1)_{p,F}$ and $A_2=[a_1,a_2)_{p,F}$, and this concludes the proof in this case. 

\noindent If the algebras are not all division, one may first write the division algebras of the considered set as $[a_0,a_1)_{p,F}, \dots, [a_{k-1},a_k)_{p,F}$. Since $[a_k,1)_{p,F}=[1,1)_{p,F}=M_p(F)$, the result holds with $a_i=1$ for $i\in\llbracket k+1; n\rrbracket$. 
\end{proof} 

As an immediate corollary, we get 
\begin{cor}
Let $A$ be a tensor product of $n$ cyclic $p$-algebras. 
There exists $a_0,\dots, a_n\in F^\times$ such that \[A=[a_0,a_1)_{p,F}\otimes[a_1,a_2)_{p,F}\otimes \dots\otimes[a_{n-1},a_n)_{p,F}.\]
\end{cor}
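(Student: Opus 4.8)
The plan is to deduce the corollary directly from~\cref{ncyclic}. A tensor product $A$ of $n$ cyclic $p$-algebras is, by hypothesis, a product $A = A_1 \otimes \dots \otimes A_n$ where each $A_i$ is a cyclic $p$-algebra over $F$. Applying~\cref{ncyclic} to the set $A_1, \dots, A_n$, we obtain elements $a_0, \dots, a_n \in F^\times$ and isomorphisms $A_i \simeq [a_{i-1}, a_i)_{p,F}$ for each $i \in \llbracket 1; n \rrbracket$. Tensoring these isomorphisms together then gives
\[A = A_1 \otimes \dots \otimes A_n \simeq [a_0,a_1)_{p,F} \otimes [a_1,a_2)_{p,F} \otimes \dots \otimes [a_{n-1},a_n)_{p,F},\]
which is the desired conclusion.

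There is essentially no obstacle here: the only point to keep in mind is that~\cref{ncyclic} is a statement about rewriting a \emph{set} (or ordered list) of $n$ cyclic $p$-algebras simultaneously in the chained form $[a_{i-1},a_i)_{p,F}$, and the corollary simply observes that this rewriting is compatible with taking the tensor product, since $A \otimes_F B \simeq A' \otimes_F B'$ whenever $A \simeq A'$ and $B \simeq B'$. One should perhaps note that the elements $a_i$ provided by~\cref{ncyclic} are genuinely in $F^\times$, not merely in $F$, which is already built into the statement of that theorem, so the formula above makes sense. Hence the proof is a one-line application, as the word ``immediate'' in the statement already signals.
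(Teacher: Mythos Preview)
Your proof is correct and matches the paper's approach: the paper simply introduces this as ``an immediate corollary'' of~\cref{ncyclic} without further argument, and your write-up spells out exactly that immediate deduction.
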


\begin{rem}
\cref{ncyclic} provides an easy proof of the fact that the essential dimension  of decomposable algebras of degree $p^n$ and exponent $p$ is at most $n+1$. This follows from the essential dimension of the group $(\mathbb{Z}/p\mathbb{Z})^n$ over a field of characteristic $p$, which is 1 (see \cite{Ledet:2004}), but the argument there is more elaborate. See \cite{McKinnie:2017} for a detailed discussion on the essential dimension of such algebras.
\end{rem}

\section{Witt index of an orthogonal sum of two anisotropic quadratic forms over a henselian valued field} 
\label{quadform.sec}
The main result in this section provides an upper bound for the Witt index of an orthogonal sum of two anisotropic quadratic forms over a henselian valued field, under some condition on the value function associated to one of them. We refer the reader to~\cite[\S 3.1.1]{TignolWadsworth:2015} for general information on value functions and norms. 

Let $F$ be a field of arbitrary characteristic, and $V$ a finite-dimensional vector space over $F$. 
We assume $F$ is endowed with a valuation $v:\,F\rightarrow \Gamma\cup\{\infty\}$, where $\Gamma$ is a totally ordered group. Replacing $\Gamma$ by its divisible hull if necessary, we may assume $\Gamma$ is divisible, 
and we denote the value group of $F$ by $\Gamma_F=v(F^\times)\subset \Gamma$. 
Recall that a $v$-value function on $V$ is a map $\alpha:\,V\rightarrow \Gamma\cup\{\infty\}$ such that 
\begin{enumerate}
\item $\alpha(x)=0$ if and only if $x=0$; 
\item $\alpha(x+y)\geq \mbox{min}\bigl(\alpha(x),\alpha(y)\bigr)$ for all $x,y\in V$;
\item $\alpha(x\lambda)=\alpha(x)+v(\lambda)$ for all $x\in V$ and $\lambda\in F$. 
\end{enumerate}
The value set of $\alpha$, denoted by $\Gamma_\alpha=\{\alpha(x),\ x\in V,\ x\not =0\}\subset \Gamma$ need not be a subgroup of $\Gamma$. Nevertheless, condition (3) above shows that $\Gamma_\alpha$ consists of a union of cosets of $\Gamma_F$, and we denote by $|\Gamma_\alpha:\Gamma_F|$ the number of cosets.  
The $v$-value function $\alpha$ is called a $v$-norm if $V$ admits a splitting base for $\alpha$, that is a base $(e_i)_{1\leq i\leq n}$ such that for all $\lambda_1,\dots,\lambda_n\in F$, we have 
\[\alpha(\sum_{i=1}^n e_i\lambda_i)=\mbox{min}\{\alpha(e_i)+v(\lambda_i),\ 1\leq i\leq n\}.\]
The value set $\Gamma_\alpha$ then consists of the cosets $\alpha(e_i)+\Gamma_F$ for $1\leq i\leq n$. 
Hence, we have $|\Gamma_\alpha:\Gamma_F|\leq \dim_F(V)$, with equality if and only if these cosets are pairwise distinct. 

Let us assume now that $V$ is endowed with an anisotropic quadratic form $\varphi$. 
As explained in~\cite[\S 4]{ET:Springer}, see also~\cite{Springer:1956}, the function $\alpha:\, V\rightarrow \Gamma\cup\{\infty\}$ defined by $\alpha(x)=\frac 12 v\bigl(\varphi(x)\bigr)$ for all $x\in V$ is a $v$-value function on $V$. We call it the $v$-value function associated to $\varphi$. 
Our construction of examples of $7$-dimensional $F_Q$-minimal forms uses the following : 
\begin{prop}
\label{WittIndex.st}
Let $(V,\varphi)$ and $(W,\psi)$ be two anisotropic quadratic spaces over a henselian valued field $(F,v)$, such that the orthogonal sum $\varphi\perp\psi$ is regular.  Denote by $\alpha$ and $\beta$ the $v$-value functions on $V$ and $W$ respectively associated to $\varphi$ and $\psi$. If $\alpha$ is a norm and $|\Gamma_\alpha:\Gamma_F|=\dim_F(V)$, then the Witt index of the orthogonal sum $\varphi\perp\psi$ satisfies 
\[i_w(\varphi\perp\psi)\leq |(\Gamma_\alpha\cap\Gamma_\beta):\Gamma_F|.\]
\end{prop}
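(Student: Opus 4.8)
Since $\varphi\perp\psi$ is regular, its Witt index is the maximal dimension of a totally isotropic subspace of $V\oplus W$. The plan is to fix such a subspace $Z$ with $\dim_F Z=i_w(\varphi\perp\psi)$, transport it isomorphically into $V$ by projection, read off a constraint on the $\alpha$‑values of the resulting subspace, and finally bound its dimension using the splitting base of $\alpha$.

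First I would observe that $Z\cap(V\oplus 0)=0$ and $Z\cap(0\oplus W)=0$, since a nonzero vector of either intersection would be an isotropic vector of the anisotropic form $\varphi$, resp.\ $\psi$. Hence the first projection $\pi\colon V\oplus W\to V$ restricts to an injection on $Z$; write $U=\pi(Z)$, so $\dim_F U=\dim_F Z=i_w(\varphi\perp\psi)$. For a nonzero $u\in U$, let $z=(u,y)\in Z$ be the unique preimage; then $y\neq 0$ by the above, and $(\varphi\perp\psi)(z)=0$ gives $\varphi(u)=-\psi(y)$. As the value group is divisible, hence torsion‑free, $v(-1)=0$, so $\alpha(u)=\frac12 v(\varphi(u))=\frac12 v(\psi(y))=\beta(y)$, and since $y\neq 0$ this common value lies in $\Gamma_\alpha\cap\Gamma_\beta$. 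Thus every nonzero vector of $U$ has $\alpha$‑value in $\Gamma_\alpha\cap\Gamma_\beta$.

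The core of the argument is then a dimension count for $U$ against a splitting base $(e_1,\dots,e_n)$ of $\alpha$, $n=\dim_F V$. The hypothesis $|\Gamma_\alpha:\Gamma_F|=n$ says exactly that $\alpha(e_1),\dots,\alpha(e_n)$ lie in pairwise distinct cosets of $\Gamma_F$; so for any nonzero $x=\sum_i e_i\lambda_i$, the quantities $\alpha(e_i)+v(\lambda_i)$ with $\lambda_i\neq 0$ lie in distinct cosets, the minimum defining $\alpha(x)$ is attained at a single index $\iota(x)$, and $\alpha(x)\in\alpha(e_{\iota(x)})+\Gamma_F$. Put $J=\{\iota(u):u\in U\setminus\{0\}\}\subseteq\{1,\dots,n\}$. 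On one hand, a nonzero $u\in U$ with vanishing $e_j$‑coordinate for every $j\in J$ would satisfy $\iota(u)\notin J$, which is impossible; hence $U$ injects into its $J$‑coordinates and $\dim_F U\leq|J|$. On the other hand, $\Gamma_\alpha$ is the disjoint union of the $n$ cosets $\alpha(e_i)+\Gamma_F$, and $\Gamma_\alpha\cap\Gamma_\beta$, being a union of cosets of $\Gamma_F$ contained in $\Gamma_\alpha$, is the union of precisely those cosets it meets; since $\alpha(u)\in\Gamma_\alpha\cap\Gamma_\beta$ for each nonzero $u\in U$, the coset $\alpha(e_{\iota(u)})+\Gamma_F$ lies in $\Gamma_\alpha\cap\Gamma_\beta$, so $|J|\leq|(\Gamma_\alpha\cap\Gamma_\beta):\Gamma_F|$. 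Chaining these gives $i_w(\varphi\perp\psi)=\dim_F U\leq|J|\leq|(\Gamma_\alpha\cap\Gamma_\beta):\Gamma_F|$.

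I do not expect a serious obstacle; the only genuinely non‑formal point is the uniqueness of the minimizing index $\iota(x)$, which is exactly where the two hypotheses on $\alpha$ — that it is a norm and that its value set is maximal — are used, the remainder being bookkeeping with cosets. The henselian hypothesis enters only through the cited fact that $x\mapsto\frac12 v(\varphi(x))$ and $y\mapsto\frac12 v(\psi(y))$ are genuine $v$‑value functions on $V$ and $W$; and one should take care that $i_w$ is indeed read off from a maximal totally isotropic subspace, which is guaranteed by the standing assumption that $\varphi\perp\psi$ is regular.
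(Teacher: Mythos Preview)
Your proof is correct and follows the same overall architecture as the paper's: take a maximal totally isotropic subspace $Z$, project it isomorphically into $V$, observe that every nonzero vector of the image has $\alpha$-value lying in $\Gamma_\alpha\cap\Gamma_\beta$, and then bound $\dim_F Z$ by the number of $\Gamma_F$-cosets in that intersection.

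The one place where you proceed differently is the dimension bound itself. The paper does not work directly with a splitting base; instead it invokes \cite[Prop.~3.14]{TignolWadsworth:2015} twice: first to know that $\alpha|_{Z_1}$ is again a norm, and then to produce a splitting complement $Y$ of $Z_1$ in $V$, so that $\Gamma_\alpha=\Gamma_{\alpha|Z_1}\cup\Gamma_{\alpha|Y}$, from which a coset count gives $|\Gamma_{\alpha|Z_1}:\Gamma_F|=\dim_F Z_1$ by contradiction with the maximality hypothesis $|\Gamma_\alpha:\Gamma_F|=\dim_F V$. Your argument replaces this black box by an explicit leading-index function $\iota$ built from the splitting base, which exists precisely because the hypothesis forces the cosets $\alpha(e_i)+\Gamma_F$ to be pairwise distinct; the injection of $U$ into its $J$-coordinates then gives the bound directly. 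This is more elementary and entirely self-contained, at the cost of being tailored to the specific hypothesis $|\Gamma_\alpha:\Gamma_F|=\dim_F V$; the paper's route, by contrast, separates out the general fact $|\Gamma_{\alpha|Z_1}:\Gamma_F|=\dim_F Z_1$, which may be of interest on its own. One cosmetic remark: your justification of $v(-1)=0$ via divisibility of $\Gamma$ is unnecessary, since any totally ordered group is already torsion-free.
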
 
\begin{rem}
Since $\varphi$ and $\psi$ are anisotropic, both are regular. 
In characteristic not $2$, it follows their orthogonal sum also is regular, but this is not true in general in characteristic $2$. 
We need this additional assumption in the statement. 
\end{rem}
\begin{proof}
Let $Z$ be a maximal totally isotropic subspace of $(V\oplus W,\varphi\perp\psi)$. 
Since $\varphi\perp\psi$ is regular, the dimension of $Z$ is equal to the Witt index of this form, and we need to prove that $\dim_F(Z)\leq |(\Gamma_\alpha\cap\Gamma_\beta):\Gamma_F|.$ Denote by $p_1$ and $p_2$ the projections of $V\oplus W$ on $V$ and $W$ respectively; since $Z$ is totally isotropic, for all $z\in Z$, we have 
\begin{equation}
\label{isot}
\bigl(\varphi\perp\psi\bigr)(z)=\varphi\bigl(p_1(z)\bigr)+\psi\bigl(p_2(z)\bigr)=0.
\end{equation}
On the other hand, $\varphi$ and $\psi$ are both anisotropic. Hence, the projection $p_1$ induces an isomorphism between $Z$ and $Z_1=p_1(Z)\subset V$. Similarly, $p_2$ induces an isomorphism between $Z$ and $Z_2=p_2(Z)\subset W$. Combining those isomorphisms and using~\eqref{isot}, we get an isometry $f$ between the quadratic spaces $(Z_1,\varphi_{|Z_1})$ and $(Z_2,-\psi_{|Z_2})$. By definition of the $v$-value functions $\alpha$ and $\beta$ associated to $\varphi$ and $\psi$ respectively, it follows that for all $z_1\in Z_1$ we have 
\[\alpha(z_1)=\frac 12 v\bigl(\varphi(z_1)\bigr)=\frac12 v\Bigl(-\psi\bigl(f(z_1)\bigr)\Bigr)=\frac12 v\Bigl(\psi\bigl(f(z_1)\bigr)\Bigr)=\beta\bigl(f(z_1)\bigr).\]
Therefore, since $f$ is bijective, the $v$-value functions $\alpha$ and $\beta$ induce $v$-value functions on $Z_1$ and $Z_2$ with the same value group, which we denote by $\Gamma_Z$. It satisfies  \[\Gamma_Z=\{\alpha(z_1),\ z_1\in Z_1,\ z_1\not=0\}=\{\beta(z_2),\ z_2\in Z_2,\ z_2\not=0\}\subset \Gamma_\alpha\cap \Gamma_\beta,\] 
and it follows that \[|\Gamma_Z:\Gamma_F|\leq  |(\Gamma_\alpha\cap\Gamma_\beta):\Gamma_F|.\]

By~\cite[Prop. 3.14]{TignolWadsworth:2015}, since $\alpha$ is a norm, its restriction to $Z_1$ also is a norm, hence in particular we have 
$|\Gamma_Z:\Gamma_F|\leq\dim_F(Z_1)$. We claim they actually are equal. Indeed, assume for the sake of contradiction that  $|\Gamma_Z:\Gamma_F|<\dim_F(Z_1)$. Again by~\cite[Prop. 3.14]{TignolWadsworth:2015}, $Z_1$ admits a splitting complement $Y$, that is $V=Z_1\oplus Y$ and $\alpha(v)=\min\bigr(\alpha(z_1),\alpha(y)\bigl)$ for all $v=z_1+y\in V$ with $z_1\in Z_1$ and $y\in Y$. 
Therefore, we have $\Gamma_\alpha=\Gamma_{\alpha|Z_1}\cup \Gamma_{\alpha|Y}=\Gamma_Z\cup \Gamma_{\alpha|Y}$. Moreover, the restriction of $\alpha$ to $Y$ also is a norm, so that its value set satisfies $|\Gamma_{\alpha|Y}:\Gamma_F|\leq \dim_F(Y)$. 
So, we get 
\[|\Gamma_\alpha:\Gamma_F|\leq |\Gamma_{Z}:\Gamma_F|+|\Gamma_{\alpha|Y}:\Gamma_F|<\dim_F(Z_1)+\dim_F(Y)=\dim_F(V),\]
which contradicts the assumption we made on $\alpha$ in the statement. 
So we have $|\Gamma_Z:\Gamma_F|=\dim_F(Z_1)=\dim_F(Z)$, and this concludes the proof. 
\end{proof}

\section{Explicit examples of $F_Q$-minimal forms} 
\label{minimal.sec}
From now on, we assume $p=2$, so that $F$ has characteristic $2$, and we use the notation $[\alpha,\beta)=[\alpha,\beta)_{2,F}$ for all $\alpha\in F$ and $\beta\in F^\times$. 
The purpose of this section is to construct explicit examples 
of $F_Q$-minimal forms of dimension $5$ and $7$. In both cases, the results are proved using characterisations of $F_Q$-minimal forms in characteristic $2$ which are due to Faivre~\cite[\S 5.2]{Faivre:thesis}. As explained below, the method is slightly different in dimension $5$ and in dimension $7$.

We first prove the following: 
\begin{prop} 
\label{dim5.thm}
Assume $a,b,c$ are elements of $F$ such that $a\dlog(b)\wedge\dlog(c)$ is a non trivial symbol in $H^3_2(F)$, and let $Q$ be the quaternion algebra $Q=[a,c)$. 
The quadratic form $\varphi_1=c[1,a+b]\perp b[1,a]\perp \qf{1}$ is anisotropic, and becomes isotropic over $F_Q$. If in addition $[a,b)\otimes_F [b,c)$ is division over $F$, then $\varphi_1$ is $F_Q$-minimal. 
\end{prop}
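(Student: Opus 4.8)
I will prove the three assertions in turn: anisotropy of $\varphi_1$ over $F$, isotropy over $F_Q$, and — under the extra hypothesis — $F_Q$-minimality via Faivre's criterion.

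For \emph{isotropy over $F_Q$}, the cleanest route is to recognize a large subform of $\varphi_1$ inside $n_Q$ and then use the subform theorem for Pfister forms. We have $Q=[a,c)$, so $n_Q=\pf{c,a}=[1,a]\perp c[1,a]$. Over $F_Q$ the form $n_Q$ becomes isotropic, hence hyperbolic (being a Pfister form), so $i_w((n_Q)_{F_Q})=2$. Now I would observe that $\varphi_1=c[1,a+b]\perp b[1,a]\perp\qf 1$ and rewrite $c[1,a+b]=c[1,a]\perp\langle bc\rangle$ using the additivity $[1,a+b]\cong[1,a]\perp\langle b\rangle$ in $I_q^1/I_q^2$-type manipulations (more precisely, the $2$-dimensional form $[1,a+b]$ is Witt-equivalent to $[1,a]\perp[1,b]$ modulo a binary quadratic form; the key point is that $c[1,a+b]\perp b[1,a]\perp\langle 1\rangle$ contains $c[1,a]\perp\langle 1\rangle$, hence shares a common subform with $n_Q=[1,a]\perp c[1,a]$). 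So $\varphi_1$ and $n_Q$ share the common subform $c[1,a]\perp\langle 1\rangle$ (of dimension $3$), and since $(n_Q)_{F_Q}$ is hyperbolic this subform is isotropic over $F_Q$; therefore $(\varphi_1)_{F_Q}$ is isotropic.

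For \emph{anisotropy over $F$}: here I would argue by contradiction. If $\varphi_1$ were isotropic, then since $\varphi_1$ is $5$-dimensional and regular (it is nonsingular: $\langle 1\rangle$ is a one-dimensional regular part and the two binary blocks are regular), its Witt class would lie in $I_q^1(F)$; computing the total dimension $5$ and the Arf/Clifford data one sees $\varphi_1$ has the same Clifford invariant as $n_Q\perp\langle 1\rangle$ up to lower terms, and a $5$-dimensional isotropic regular form is Witt-equivalent to a $3$-dimensional one, which would force a $3$-fold Pfister relation. The clean statement: the Clifford invariant of $\varphi_1$ encodes the symbol $a\dlog(b)\wedge\dlog(c)$ in $H^3_2(F)$, and if $\varphi_1$ were isotropic this symbol would have to be split — contradicting the hypothesis that it is nontrivial. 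This is where I would lean on Faivre's (or Kato's) description of the Clifford invariant of odd-dimensional quadratic forms in characteristic $2$: the relevant invariant in $H^3_2(F)$ of $\varphi_1$ must be exactly (a multiple of) $a\dlog(b)\wedge\dlog(c)$, and nonvanishing of that symbol gives anisotropy.

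For \emph{$F_Q$-minimality}: by Faivre's characterization of $F_Q$-minimal forms of dimension $5$ using the Clifford algebra \cite[\S 5.2]{Faivre:thesis}, a $5$-dimensional anisotropic form $\varphi_1$ with $(\varphi_1)_{F_Q}$ isotropic is $F_Q$-minimal precisely when its even Clifford algebra (a degree-$4$ algebra, here of exponent $2$) does not "contain $Q$" in the appropriate sense — equivalently, when the relevant quaternion decomposition of the Clifford algebra stays division after removing the obstruction class of $Q$. Using \cref{ncyclic} I can write the even Clifford algebra $C_0(\varphi_1)$ as a biquaternion algebra in the normalized form $[x,y)\otimes[y,z)$ with a shared slot, and a direct Clifford-algebra computation for $\varphi_1=c[1,a+b]\perp b[1,a]\perp\langle 1\rangle$ will identify this as (Brauer-equivalent to) $[a,b)\otimes[b,c)$ — this is exactly why the shared-slot description of \cref{ncyclic} was engineered into the example. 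The hypothesis that $[a,b)\otimes[b,c)$ is division then says precisely that the even Clifford algebra is division, which by Faivre's criterion is equivalent to $F_Q$-minimality.

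\medskip

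The main obstacle I expect is the bookkeeping in the Clifford-invariant computation in characteristic $2$: identifying $C_0(\varphi_1)$ (or its class in $H^2_2(F)$ together with the $H^3_2(F)$-obstruction) explicitly with $[a,b)\otimes[b,c)$, and matching the residue-in-$H^3_2(F)$ calculation that makes anisotropy equivalent to nonvanishing of $a\dlog(b)\wedge\dlog(c)$. The rest — isotropy over $F_Q$ via the shared subform of $n_Q$, and the final invocation of Faivre's dimension-$5$ criterion — should be routine once the Clifford computation is in hand.
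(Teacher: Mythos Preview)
Your proposal has genuine gaps in all three parts.

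\textbf{Isotropy over $F_Q$.} The assertion ``$c[1,a+b]=c[1,a]\perp\qf{bc}$'' equates a $2$-dimensional form with a $3$-dimensional one; likewise ``$[1,a+b]\cong[1,a]\perp\qf b$'' is false on dimension grounds. You are confusing the Witt equivalence $[1,a+b]\sim[1,a]\perp[1,b]$ with isometry. In particular, you have not shown that $c[1,a]\perp\qf1$ is a \emph{subform} of $\varphi_1$, so the argument does not go through.

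\textbf{Anisotropy.} A general $5$-dimensional form carries no natural invariant in $H^3_2(F)$; the Clifford invariant lives in $H^2_2(F)$. There is no direct mechanism by which isotropy of $\varphi_1$ would kill the symbol $a\,\dlog(b)\wedge\dlog(c)$, so this step is not a proof.

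\textbf{$F_Q$-minimality.} Your Clifford computation is off by exactly the class of $Q$: in fact $\C_0(\varphi_1)\sim[a+b,c)\otimes[a,b)\simeq[a+b,bc)$, and it is $\C_0(\varphi_1)\otimes Q$ (not $\C_0(\varphi_1)$ itself) that is Brauer equivalent to $[a,b)\otimes[b,c)$. Faivre's criterion (his Prop.~5.2.12) also has a second condition you omit: $\varphi_1$ must be a Pfister neighbour of a form $\bpf{x}\otimes n_Q$. Your two errors happen to cancel numerically, but the reasoning is wrong.

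The missing idea, which the paper uses and which fixes everything at once, is to exhibit $\varphi_1$ as a subform of $\pf{b,c,a}=\bpf b\otimes n_Q$. Concretely, add $bc[1,a+b]$ to $\varphi_1$ and use $\pf{b,a+b}\simeq\pf{b,a}$ (since $[a+b,b)\simeq[a,b)$) to identify the resulting $7$-dimensional form with a subform of $\pf{b,c,a}$. This single embedding yields anisotropy (the $3$-fold Pfister form has nontrivial Arason invariant, hence is anisotropic), isotropy over $F_Q$ (an $8$-dimensional form hyperbolic over $F_Q$ forces any $5$-dimensional subform to be isotropic there), and the Pfister neighbour condition in Faivre's criterion. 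The remaining check is that $\C_0(\varphi_1)\otimes Q\sim[a,b)\otimes[b,c)$ is division, which is exactly the extra hypothesis.
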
 

\begin{proof}
Let $F$, $a,b,c$ and $\varphi_1$ be as in the statement. 
Since the quaternion algebras $[a+b,b)$ and $[a,b)$ are isomorphic, they have isometric norm form $\pf{b,a+b}\simeq \pf{b,a}$. It follows that 
\[\varphi_1\perp bc[1,a+b]\simeq c\pf{b,a}\perp b[1,a]\perp\qf{1}.\] 
Therefore, one may easily check that $\varphi_1$ is isometric to a subform of $\pf{b,c,a}$. 
This form has non trivial Arason invariant $a\dlog(b)\wedge\dlog(c)$, hence it is anisotropic and so is $\varphi_1$. Moreover $\pf{b,c,a}=\bpf{b}\otimes n_Q$ is hyperbolic over the field $F_Q$, and it follows that the $5$-dimensional subform $\varphi_1$ is isotropic over $F_Q$. We now use \cite[Prop 5.2.12]{Faivre:thesis} to check $\varphi_1$ is $F_Q$-minimal. The first condition has already been checked: $\varphi_1$ is a Pfister neighbour of $\pf{b,c,a}=\bpf{b}\otimes n_Q$. Moreover, its Clifford algebra is Brauer equivalent to $[a+b,c)\otimes[a,b)\simeq[a+b,bc)$, and the tensor product with $Q$ is 
\[[a+b,bc)\otimes [a,c)=[a,b)\otimes [b,c),\] which is division as required. This finishes the proof. 
\end{proof}

To get an explicit example, it remains to exhibit a field over which the assumptions of the theorem are fulfilled. This can be done as follows : 
\begin{example} Let $F_0$ be a field of characteristic $2$ and consider the field of iterated Laurent series $F=F_0((c^{-1}))((b^{-1}))((a^{-1}))$. By~\cite[Lem. 3.4]{Chapman:2020BBMS}, the biquaternion algebra $[a,b)\otimes [b,c)$ is division over $F$. Moreover, the symbol $a\dlog b\wedge\dlog c$ is non trivial in $H^3_2(F)$. 
Therefore, $\varphi_1$ and $Q$ defined as in~\cref{dim5.thm} provide an explicit example of an $F_Q$-minimal form. 
\end{example}

\begin{rem}

\begin{enumerate}
\item By contrast, in characteristic $0$, there exists no $5$-dimensional $F_Q$-minimal form over the field of iterated Laurent series $\mc((x))((y))((z))$. Indeed, there is no biquaternion division algebra over this field, see~\cite[XIII, Rmk. 2.5]{Lam}, hence the conditions in~\cite[Prop. 4.1]{HoffmannLewisVanGeel:1992} are not satisfied. 
\item Since $b[1,b]\perp\qf{1}\simeq [1,b]\perp\qf{1}\simeq \mh+\qf{1}$, the form $\varphi_1$ is isometric to 
\[c[1,a+b]\perp b[1,a+b]\perp\qf{1}\simeq c\bigl(\pf{bc,a+b}\perp\qf{c}\bigr).\] 
In characteristic $p$, we may still consider the homogeneous degree $p$ form $\qf{c}\perp n_T$, where $n_T$ now stands for the norm form of $[a+b,bc)_{p,F}$. It is still true in this case that this form is anisotropic and becomes isotropic over the function field of the Severi-Brauer variety of $[a,c)_{p,F}$. This can be checked using~\cite[Thm. 6]{Gille:2000}. 
\end{enumerate}
\end{rem}

The even Clifford algebra of the quadratic form $\varphi_1$, endowed with its canonical involution, may be computed as in~\cite[Ex 2.1]{DolphinQueguiner:2021}. Applying~\cite[Prop. 3.6]{DolphinQueguiner:2017}, we get the following : 
\begin{cor}
Over the field $F=F_0((c^{-1}))((b^{-1}))((a^{-1}))$, with $F_0$ of characteristic $2$, the biquaternion algebra with symplectic involution 
\[(C,\gamma)=\bigl([a+b,c),\bar{\ \,}\bigr)\otimes \bigl([a,b),\bar{\ \,}\bigl)\]
satisfies the following : 
it is anisotropic, becomes hyperbolic over $F_Q$, and does not contain any $\gamma$-stable subalgebra isomorphic to $(Q,\bar{\ }\,)$, where $Q=[a,c)$. 
\end{cor}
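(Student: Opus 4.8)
The plan is to derive the Corollary from the Proposition \cref{dim5.thm} (applied in the setting of the Example) by passing through the even Clifford algebra functor for quadratic forms in characteristic $2$. First I would recall that for a $5$-dimensional regular quadratic form $\varphi_1$, the even Clifford algebra $C_0(\varphi_1)$ is a degree $4$ central simple algebra over $F$, carrying a canonical symplectic involution $\gamma$ coming from the canonical involution of the Clifford algebra; and that hyperbolicity of $(C_0(\varphi_1),\gamma)$ over a field extension is equivalent to isotropy of $\varphi_1$ over that extension (this is the characteristic $2$ analogue of the classical correspondence, and is exactly what is used in~\cite{DolphinQueguiner:2017}). Since $\varphi_1$ is anisotropic over $F$ and becomes isotropic over $F_Q$ by \cref{dim5.thm}, it follows immediately that $(C,\gamma)$ is anisotropic over $F$ and hyperbolic over $F_Q$.

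Next I would identify the algebra $C$ explicitly. From the computation in the proof of \cref{dim5.thm}, the Clifford invariant of $\varphi_1$ is the Brauer class of $[a+b,c)\otimes[a,b)$; since $\varphi_1$ is a Pfister neighbour of $\pf{b,c,a}=\bpf{b}\otimes n_Q$, its even Clifford algebra is \emph{itself} isomorphic (not merely Brauer-equivalent) to $[a+b,c)\otimes[a,b)$ as an algebra with symplectic involution, the two quaternion factors each carrying their canonical (hence symplectic, in characteristic $2$) involution. This is precisely the content of the explicit even-Clifford-algebra computation referenced via~\cite[Ex 2.1]{DolphinQueguiner:2021} and~\cite[Prop. 3.6]{DolphinQueguiner:2017}; I would simply invoke those results to write $(C,\gamma)=\bigl([a+b,c),\bar{\ \,}\bigr)\otimes\bigl([a,b),\bar{\ \,}\bigr)$.

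For the last assertion, I would argue by contradiction: if $(C,\gamma)$ contained a $\gamma$-stable subalgebra isomorphic to $(Q,\bar{\ \,})$ with $Q=[a,c)$, then by the general structure theory of algebras with involution (the "going-up"/"going-down" decompositions, cf.~\cite{DolphinQueguiner:2017}) the involution would decompose as $(C,\gamma)\cong(Q,\bar{\ \,})\otimes(Q',\sigma')$ for some quaternion algebra with involution $(Q',\sigma')$; comparing Brauer classes gives $[Q']=[C]+[Q]=[a,b)\otimes[b,c)$ in the Brauer group (using the identity $[a+b,bc)\otimes[a,c)=[a,b)\otimes[b,c)$ already recorded in the proof of \cref{dim5.thm}). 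But $[a,b)\otimes[b,c)$ is a \emph{division} biquaternion algebra over $F$ by~\cite[Lem. 3.4]{Chapman:2020BBMS}, so its Brauer class cannot be represented by a single quaternion algebra $Q'$ --- a contradiction. Hence no such subalgebra exists.

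The main obstacle is getting the \emph{isomorphism} $C\cong[a+b,c)\otimes[a,b)$ of algebras with involution correct, rather than just the Brauer-class identity: one must check that the symplectic involution $\gamma$ produced by the even Clifford construction matches the tensor product of the two canonical involutions, and that the neighbour relation $\varphi_1\subset\pf{b,c,a}$ is compatible with this identification. Once that identification is in hand, anisotropy and hyperbolicity transfer formally from \cref{dim5.thm}, and the non-existence of the $\gamma$-stable copy of $(Q,\bar{\ \,})$ reduces, exactly as in \cref{dim5.thm}, to the division property of the biquaternion algebra $[a,b)\otimes[b,c)$ over the iterated Laurent series field $F$.
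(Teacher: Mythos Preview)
Your proposal is correct and follows essentially the same route as the paper: compute the even Clifford algebra of $\varphi_1$ with its canonical involution via~\cite[Ex.~2.1]{DolphinQueguiner:2021}, then invoke~\cite[Prop.~3.6]{DolphinQueguiner:2017} to transfer anisotropy, $F_Q$-hyperbolicity, and the non-existence of a $\gamma$-stable copy of $(Q,\bar{\ \,})$ from the $F_Q$-minimality of $\varphi_1$. Your contradiction argument for the third property (decompose $C\simeq Q\otimes Q'$, compare Brauer classes, and use that $[a,b)\otimes[b,c)$ has index $4$) is precisely the content that~\cite[Prop.~3.6]{DolphinQueguiner:2017} packages, so you are unpacking the cited proposition rather than giving a genuinely different proof.
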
 
\begin{rem} 
Alternately, using some explicit base change, and the characterization of orthogonal involutions on a split quaternion algebra given in~\cite[Lem 7.1]{Dolphin:2014}, one may check that the even Clifford algebra of $\varphi_1$ is isomorphic to 
\[(C,\gamma)=\bigl([a+b,bc),\bar{\ \,}\bigl)\otimes\bigl(M_2(F),{\mathrm{ad}}_{\bpf{b}}\bigl).\]
The corollary then follows from~\cite[Thm 3.4]{DolphinQueguiner:2017}. 
\end{rem}

The previous example of a $5$-dimensional quadratic form extends to dimension $7$ as follows : 
\begin{prop}
\label{dim7.thm}
Consider the quaternion algebra $Q=[a,d)$ over the field of iterated Laurent series $F=F_0((d^{-1}))((c^{-1}))((b^{-1}))((a^{-1}))$.  The quadratic form 
\[\varphi_2=d[1,a+c]\perp c[1,a+b]\perp bcd[1,a]\perp \qf{1}\] over $F$ is $F_{Q}$-minimal. 
\end{prop}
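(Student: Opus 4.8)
The plan is to mimic the structure of the proof of~\cref{dim5.thm}, using Faivre's characterisation~\cite[Prop 5.2.12]{Faivre:thesis}, but with the extra input of~\cref{WittIndex.st} to control the minimal splitting index. First I would check that $\varphi_2$ is a Pfister neighbour, anisotropic, and isotropic over $F_Q$. Arguing as in dimension $5$, since $[a+c,c)\simeq[a,c)$ and $[a+b,b)\simeq[a,b)$ have isometric norm forms, one adds suitable hyperbolic complements to see that $\varphi_2$ embeds as a subform of the $3$-fold Pfister form $\pi=\pf{?,?}\otimes\cdots$; more precisely I would identify $\varphi_2$ as a neighbour of $\bpf{b,c}\otimes n_Q$ (or an isometric rescaling thereof), whose Arason--Kato invariant is the symbol $a\,\dlog(b)\wedge\dlog(c)\wedge\dlog(d)$, nontrivial in $H^4_2(F)$ over the $4$-variable Laurent series field; nontriviality of $\pi$ gives anisotropy of $\varphi_2$, and since $\pi$ is hyperbolic over $F_Q$ (it is a multiple of $n_Q$) the $7$-dimensional neighbour $\varphi_2$ becomes isotropic over $F_Q$. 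This handles the first and the "becomes isotropic" conditions in Faivre's criterion.

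Next, I would compute the Clifford invariant and check the non-triviality condition that appears in~\cite[Prop 5.2.12]{Faivre:thesis}: the even Clifford algebra of $\varphi_2$ should be Brauer equivalent to a product of three quaternion symbols of the shape $[a+c,d)\otimes[a+b,c)\otimes[a,b)$, and using the identities~\eqref{easyid} and~\eqref{mainid} together with additivity/multiplicativity of the symbol, one rewrites $C(\varphi_2)\otimes Q$ in the chained form $[a,b)\otimes[b,c)\otimes[c,d)$. By~\cref{ncyclic} and~\cite[Lem. 3.4]{Chapman:2020BBMS} (iterated to three symbols over the $4$-variable Laurent series field), this triquaternion algebra is division, which is exactly the index/invariant condition Faivre requires.

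The heart of the argument, and the point where the $7$-dimensional case genuinely differs from the $5$-dimensional one, is bounding the minimal splitting index $\bar s_{F_Q}(\varphi_2)$ so that Faivre's criterion applies. For this I would pass to the henselian valued field $F$ with its natural rank-$4$ valuation $v$ coming from the iterated Laurent series, and write $\varphi_2\perp$ (the conic subform of $n_Q$, or equivalently some regular $(2\dim-2)$-dimensional companion form) as an orthogonal sum $\varphi\perp\psi$ of two anisotropic regular quadratic forms. The associated $v$-value function $\alpha$ of one summand, say $\varphi=\varphi_2$ itself, is a norm with splitting base the obvious diagonal/hyperbolic basis, and the values of the basis vectors are $\tfrac12 v$ of $d, c, bcd, 1$ in the $[1,\cdot]$ blocks — one checks these lie in four distinct cosets of $\Gamma_F=\mathbb Z^4$, so $|\Gamma_\alpha:\Gamma_F|=\dim_F V$ and~\cref{WittIndex.st} applies. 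Computing $\Gamma_\beta$ for the other summand and intersecting, I expect $|(\Gamma_\alpha\cap\Gamma_\beta):\Gamma_F|$ to be small enough (presumably $1$) that the Witt index of every relevant orthogonal sum appearing in the definition of $\bar s_{F_Q}$ is forced below the threshold in Faivre's proposition, ruling out any proper isotropic subform and giving $F_Q$-minimality.

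The main obstacle I anticipate is precisely this last step: correctly setting up the orthogonal sum to which~\cref{WittIndex.st} is applied, identifying which companion form $\psi$ is the right one (it must make $\varphi_2\perp\psi$ regular, which in characteristic $2$ is a real constraint), and carefully computing the two value sets $\Gamma_\alpha$ and $\Gamma_\beta$ and their intersection in $\mathbb Z^4$. Getting the coset bookkeeping right — in particular verifying the four cosets of $\varphi_2$ are pairwise distinct and that the intersection with $\Gamma_\beta$ is as small as claimed — is the delicate part; the rest (anisotropy via the Arason--Kato symbol, isotropy via Pfister neighbour, the Clifford algebra computation via~\eqref{easyid}, \eqref{mainid} and~\cref{ncyclic}) is a routine if lengthy adaptation of the dimension-$5$ argument.
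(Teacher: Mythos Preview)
Your plan has the right ingredients for the first half (Pfister-neighbour embedding into $\bpf{b,c}\otimes n_Q$, anisotropy via nontriviality of the symbol in $H^4_2(F)$, isotropy over $F_Q$, and the Clifford computation leading to $[a,b)\otimes[b,c)\otimes[c,d)$), but the second half misidentifies how \cref{WittIndex.st} enters, and this is a genuine gap.

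First, the criterion used in dimension~$7$ is not \cite[Prop.~5.2.12]{Faivre:thesis} but \cite[Prop.~5.2.7]{Faivre:thesis}: one must show $\bar s_{F_Q}(\varphi_2)=3$, i.e.\ rule out the existence of \emph{any} regular $4$-dimensional form $\chi$ with $\varphi_2\perp\chi$ Witt equivalent to $\tau\otimes n_Q\perp\qf{1}$ and $\dim\tau\leq 2$. The companion form in the orthogonal sum to which \cref{WittIndex.st} is applied is therefore not a fixed form you choose (such as ``the conic subform of $n_Q$''), but this hypothetical $\chi$. Concretely, the paper passes to the $8$-dimensional form $\tilde\varphi_2=\varphi_2\perp[1,a+b+c]$ and the Albert form $\tilde\chi=\chi\perp[1,\disc\chi]$, then uses the Clifford invariant to force $\dim\tau=1$ and to pin down $\tilde\chi$ up to an \emph{unknown} scalar $\mu\in F^\times$: $\tilde\chi\simeq\mu\bigl(cd[1,a+c]\perp bc[1,a+b]\perp[1,b+c]\bigr)$.

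Second, because of this unknown $\mu$, the value set $\Gamma_\beta$ of $\tilde\chi$ is only known up to translation by $\tfrac12 v(\mu)$, and one cannot compute $\Gamma_\alpha\cap\Gamma_\beta$ directly. The missing idea in your plan is a separate valuation-theoretic lemma constraining $\tfrac12 v(\mu)\bmod\Gamma_F$: one extends scalars to $F'=F[t]/(t^2+t+a)$ where $n_Q$ is hyperbolic, so $\tilde\varphi_2$ and $\tilde\chi$ become Witt equivalent, forcing $\mu$ to be a similarity factor of a specific Albert form over $F'$, hence $\mu\in F'^{\times 2}\Nrd_B(B^\times)$ for the biquaternion algebra $B=\bigl([b,c)\otimes[c,d)\bigr)_{F'}$; computing $\Gamma_B$ then restricts $\tfrac12 v(\mu)$ to four cosets. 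Only after this does the coset bookkeeping give $|(\Gamma_\alpha\cap\Gamma_\beta):\Gamma_F|\leq 4<5=i_w(\tilde\varphi_2\perp\tilde\chi)$, the desired contradiction. (Note also that $\alpha$ is the value function on the $8$-dimensional $\tilde\varphi_2$, with $|\Gamma_\alpha:\Gamma_F|=8$, not on the $7$-dimensional $\varphi_2$; your ``four distinct cosets'' cannot satisfy the hypothesis $|\Gamma_\alpha:\Gamma_F|=\dim V$.)
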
 
\begin{proof}
The first part of the argument follows the same line as in dimension $5$. 
Using the following isometries 
\[d[1,a+c]\perp cd[1,a+c]\simeq d\pf{c,a+c}\simeq d\pf{c,a}\simeq d[1,a]\perp cd[1,a],\] 
and similarly \[c[1,a+b]\perp bc[1,a+b]\simeq c[1,a]\perp bc[1,a],\]
one may check that $\varphi_2$ is a subform of $\qf{1,c,bc}\otimes\pf{d,a}=\qf{1,c,bc}\otimes n_Q$, which in turn is a subform of $\bpf{b,c}\otimes n_Q=\pf{b,c,d,a}$. 
Since $a\dlog b\wedge\dlog c\wedge\dlog d$ is a non-trivial symbol in $H^4_2(F)$, this form is anisotropic, and so is $\varphi_2$. Moreover, over the function field $F_Q$, the forms $n_Q$ and $\qf{1,c,bc}\otimes n_Q$ are hyperbolic, hence $\varphi_2$ is isotropic. 

Denote by $\chi$ the quadratic form $cd[1,a+c]\perp bc[1,a+b]$. Since $[1,a]\perp\qf{1}\simeq \mh\perp\qf{1}$, the computation above actually shows that $\varphi_2\perp \chi$ is Witt equivalent to $\qf{1,c,bc}\otimes n_Q\perp \qf{1}$. 
In the language of~\cite[Def 5.2.3]{Faivre:thesis}, this shows that the invariant $\bar s_{F_Q}(\varphi_2)$ is at most $3$. 
In view of~\cite[Prop 5.2.7]{Faivre:thesis}, to prove $\varphi_2$ is $F_Q$-minimal, it is enough to prove that $\bar s_{F_Q}(\varphi_2)=3$, that is, there is no $4$-dimensional regular quadratic form $\chi$ over $F$ such that 
$\varphi_2\perp \chi$ is Witt equivalent to $\tau\otimes n_Q\perp\qf{1}$ with $\tau\otimes n_Q$ anisotropic, and $\tau$ of dimension $1$ or $2$. 

Let us assume, for the sake of contradiction that such forms exist. Using again $[1,\lambda]\perp\qf{1}\simeq \mh\perp \qf{1}$ for all $\lambda\in F$, we get that
\[\tilde\varphi_2\perp\tilde\chi\perp\qf{1}\sim \tau\otimes n_Q\perp\qf{1},\]
where \[\tilde\varphi_2=d[1,a+c]\perp c[1,a+b]\perp bcd[1,a]+[1,a+b+c]\] and $\tilde\chi$ is the Albert form 
$\chi\perp[1,\delta]$, where $\delta$ is the discriminant of $\chi$. 
By~\cite[Prop 13.6]{EKM}, it follows that 
\[\tilde \varphi_2\perp\tilde \chi\sim\tau\otimes n_Q,\mbox{ with }\tau\mbox{ of dimension $1$ or $2$}.\]

We may now use Clifford algebras to prove $\tau$ has dimension $1$. Indeed, since both forms have trivial discriminant, 
by additivity of the Clifford invariant~\cite[Lem. 14.2]{EKM}, the Witt equivalence above implies that 
$\C(\tilde \varphi_2)\otimes\C(\tilde \chi)$ is split if $\tau$ has dimension $2$ and Brauer equivalent to $Q$ if $\tau$ has dimension $1$.  
The Clifford algebra of $\tilde \varphi_2$ may be computed as in~\cite[Appendix]{DolphinQueguiner:2021}, and we get \[\C(\tilde \varphi_2)\simeq [a+c,d)\otimes[a+b,c)\otimes [a,bcd)\simeq [a,b)\otimes[b,c)\otimes[c,d).\] By \cite[Lem. 3.4]{Chapman:2020BBMS}, it is a division algebra. On the other hand, $\tilde \chi$ is an Albert form, so $\C(\tilde\chi)$ is a biquaternion algebra. Hence the tensor product is not split, and it follows that $\tau$ has dimension $1$. 
Moreover, the Clifford algebra of $\tilde \chi$ is Brauer-equivalent to $\C(\tilde\varphi_2)\otimes Q$ which is  
\[[a+c,d)\otimes[a+b,c)\otimes [a,bc)\sim[a+c,cd)\otimes[a+b,bc),\]
Hence, we get $\tilde \chi\simeq \mu\bigl(cd[1,a+c]+bc[1,a+b]+[1,b+c]\bigr)$ for some $\mu\in F^\times$. 

To finish the proof, we use~\cref{WittIndex.st}. Denote by $v$ the $(d^{-1},c^{-1},b^{-1},a^{-1})$-adic valuation of $F$. Its value group is $\Gamma_F=\mz^4\subset \Gamma=\mq^4$. By~\cite[Prop 3.8 and Ex 3.11 (ii)]{TignolWadsworth:2015}, every $v$-value function on a finite dimensional $F$-vector space is a norm. In particular, the $v$-value functions $\alpha$ and $\beta$ respectively associated to the quadratic forms $\tilde\varphi_2$ and $\tilde \chi$ are norms. Hence, the value set of $\alpha$ is the union of the cosets of values of vectors of a splitting base of $V$. Specifically, one may check that the base of $V$ corresponding to the following presentation of the quadratic form,
\[\tilde \varphi_2\simeq d[1,a+c]\perp c[1,a+b]\perp bcd[1,a]+[1,a+b+c],\]
is a splitting base. It follows that the value set $\Gamma_\alpha$ of the norm $\alpha$ consists of the cosets of the following elements of $\Gamma$:
\[\{\alpha_1;\alpha_1+\alpha';\alpha_2;\alpha_2+\alpha';\alpha_3;\alpha_3+\alpha';\alpha_4;\alpha_4+\alpha'\},\]
\[\mbox{ where }\alpha_1=(\frac12,0,0,0),\  \alpha_2=(0,\frac12,0,0),\ 
\alpha_3=(\frac12,\frac12,\frac12,0),\ \alpha_4=(0,0,0,0),\]
\[\mbox{ and }\alpha'=(0,0,0,\frac12).\]
Any two of those clearly give rise to distinct cosets, so that $|\Gamma_\alpha:\Gamma_F|=8$, which is the dimension of the underlying vector space. Hence the assumptions in~\cref{WittIndex.st} are satisfied and we get that the Witt index of $\tilde\varphi_2\perp\tilde\chi$ is at most 
$|(\Gamma_\alpha\cap\Gamma_\beta):\Gamma_F|$, where $\Gamma_\beta$ denotes the value set of the norm $\beta$. On the other hand, since $\tilde\varphi_2\perp \tilde\chi\sim\tau\otimes n_Q$ with $\tau$ of dimension $1$, this orthogonal sum has Witt index $5$, so that 
\[|(\Gamma_\alpha\cap\Gamma_\beta):\Gamma_F|\geq 5.\]

Besides, we have 
\[\tilde\chi\simeq \mu \bigl(cd[1,a+c]+bc[1,a+b]+[1,b+c]\bigr).\]
Hence, the value set $\Gamma_\beta$ consists of the cosets of the elements 
\[\{\beta_5+\beta'';\beta_5+\beta''+\alpha';\beta_6+\beta'';\beta_6+\beta''+\alpha',\beta'',\beta_7+\beta''\}\]
\[\mbox{ where }\beta_5=(\frac12,\frac12,0,0),\ \beta_6=(0,\frac12,\frac12,0), \ \beta_7=(0,0,\frac12,0)\mbox{ and }\beta''=\frac12\,v(\mu),\]
and $\alpha'$ is as defined above. 

In order to get a contradiction, we use the following : 
\begin{lem}
\label{val.lem}
The element $\frac 12 v(\mu)+\Gamma_F$ is the coset of one of the following : 
\[\{\alpha_4,\ \alpha_4+\alpha', \alpha_2,\alpha_2+\alpha'\}.\]
\end{lem} 
It follows that $\beta''$ in the description of $\Gamma_2$ may only take one of those four values, and in each case, one may check that $\Gamma_\alpha\cap \Gamma_\beta$ consists of at most four distinct cosets, which contradicts the inequality above. 

Hence, to complete the argument, it only remains to prove Lemma~\ref{val.lem}.
Consider the quadratic extension $F'=F[t]/(t^2+t+a)$. There is an isomorphism 
\[F'\simeq F_0((d^{-1}))((c^{-1}))((b^{-1}))((z^{-1})),\mbox{where $z\in F'$ satisfies $z^2+z=a$.}\]
Over $F'$, $n_Q$ is hyperbolic, so $\tilde\varphi_2$ and $\tilde\chi$ are Witt equivalent. Computing the Witt classes of both forms over this field, we get 
\[d[1,c]\perp c[1,b]\perp[1,b+c]\sim\mu\bigl(d[1,c]\perp c[1,b]\perp[1,b+c]\bigr).\]
Hence, $\mu$ is a similarity factor of the Albert form $d[1,c]\perp c[1,b]\perp[1,b+c]$ over the field $F'$. 
By~\cite[(16.6)]{BOI}, it follows $\mu$ belongs to $F'^{\times2}\Nrd_B{B^\times}$, where $B$ is the biquaternion algebra $\bigl([b,c)\otimes [c,d)\bigr)_{F'}$. Since $F$ is henselian, the value $v$ extends uniquely to the field $F'$ and to the division algebra $B$, with value groups respectively denoted by $\Gamma_{F'}$ and $\Gamma_B$. 
Moreover, we have $v_B(x)=\frac 14 v'\bigl(\Nrd_B(x)\bigr)$ for all $x$ in $B$. 
Therefore, $\frac 12 v(\mu)$ belongs to $\Gamma_{F'}+2\Gamma_B$. 
Clearly, we have \[\Gamma_{F'}=\mz^3\times \frac 12\mz\subset\Gamma.\]
To compute the value group of $\Gamma_B$, we proceed as follows. Consider a generating pair $(i,j)$ of $[b,c)_{F'}$ and a generating pair $(k,\ell)$ of $[c,d)_{F'}$ with $i^2+i=b$, $j^2=c$, $ij=j(i+1)$, and similarly $k^2+k=c$, $\ell^2=d$ and $k\ell=\ell(k+1)$. Using those relations, one may easily check that $v_B(i)=(0,0,-\frac12,0)$, $v_B(j)=(0,-\frac 12,0,0)=v_B(k)$ and $v_B(\ell)=(-\frac12,0,0,0)$. Moreover, since $j$ and $k$ commute in $B$, we have $(j+k)^2=k$, hence $v_B(j+k)=(0,-\frac 14,0,0)
.$ Since $B$ has degree $4$, it follows that $B$ is unramified and \[\Gamma_B\simeq \frac 12\mz\times \frac 14\mz\times\frac 12\mz\times\mz\subset\Gamma.\] The assertion on $v(\mu)$ now follows by a direct computation. 
\end{proof}

\bibliographystyle{abbrv}

\end{document}